\newtheorem{thm}{Theorem}[section]
\newtheorem{cor}[thm]{Corollary}
\newtheorem{definition}[thm]{Definition}
\newcommand{\E}{{\cal{E}}}
\newcommand{\Sc}{\mathcal{S}}
\def\R{\mathbb{R}}
\def\E{\mathbb{E}}
\def\P{\mathbb{P}}
\def\1{\mathbf{1}}
\def \btheta{\mbox{\boldmath$\theta$}}
\def\S{\mathcal{S}}
\def \hyp{\underset{\theta_0}{\overset{\theta_1}{\gtrless}}}
\begin{document}
\pagestyle{empty}

\title{Sequential Analysis in High-Dimensional \\ Multiple Testing and Sparse Recovery}

\author{\IEEEauthorblockN{Matthew Malloy}
\IEEEauthorblockA{ Electrical and Computer Engineering\\
University of Wisconsin-Madison\\
Email: mmalloy@wisc.edu}
\and
\IEEEauthorblockN{Robert Nowak}
\IEEEauthorblockA{
Electrical and Computer Engineering\\
University of Wisconsin-Madison\\
Email: nowak@ece.wisc.edu}}
\maketitle
\vspace{-2cm}

\begin{abstract}
This paper studies the problem of high-dimensional multiple testing and sparse recovery from the perspective of sequential analysis. In this setting, the probability of error is a function of the dimension of the problem. A simple sequential testing procedure is proposed. We derive necessary conditions for reliable recovery in the non-sequential setting and contrast them with sufficient conditions for reliable recovery using the proposed sequential testing procedure.  Applications of the main results to several commonly encountered models show that sequential testing can be exponentially  more sensitive to the difference between the null and alternative distributions (in terms of the dependence on dimension), implying that subtle cases can be much more reliably determined using sequential methods.
\end{abstract}

\section{Introduction}
High-dimensional testing and sparse recovery problems arise in a broad range of scientific and engineering applications.  The basic problem is summarized as follows.  Let $\btheta \in \R^n$ denote a parameter vector.  The dimension $n$ may be very large (thousands or millions or more), but $\btheta$ is sparse in the sense that most of its components are equal to a baseline/null value denoted by $\theta_0$ (e.g., $\theta_0 = 0$). The support of the sparse subset of components that deviate from the baseline is denoted by $\S$:
\begin{eqnarray} \label{eqn:undstats1}
  \S = \{i: \theta_i \neq \theta_0\}.
\end{eqnarray}
The parameter $\btheta$ is observed stochastically according to
\begin{eqnarray} \label{eqn:underlyingstats}
 y_i \sim f(y_i|\theta_i)
\end{eqnarray}
where $f(\cdot|\theta)$ is a parametric family of densities indexed by a scalar parameter $\theta \in \R$. The goal of the high-dimensional testing and sparse recovery problem is to identify $\S$ from observations of this form.

The conventional theoretical treatment of this problem assumes that a set of observations
are collected prior to data analysis.  Typically, in what we refer to as the non-sequential setting, each of the $n$ components is measured (one or more times) according to the model above {\em and then} component-wise tests are performed to estimate $\S$.

This papers investigates the high-dimensional testing problem from the perspective of sequential analysis.  In this setting, observations are gathered sequentially and adaptively, based on information gleaned from previous observations.  This allows the observation process to focus sensing resources on certain components at the expense of ignoring others.  For example, the process might first measure each component once, then focus on a reduced subset of `interesting' components in a second pass.  Such approaches have attracted attention lately due to importance in the biological sciences. They are also relevant in communications problems including spectrum sensing in cognitive radio, one of the motivations for our work.

To compare sequential and non-sequential methods we impose a budget on the total number of observations that can be made.  The main results show sequential methods can be dramatically more sensitive to small differences between the baseline/null $\theta_0$ and the alternative values of $\theta_i$.   Our approach is similar to the so-called {\em distilled sensing} method proposed in \cite{haupt} \cite{5513489}, however there are two main distinctions.  First, the results in this paper are applicable to a large class of problems characterized by one-sided tests; the distilled sensing approach is specific to the Gaussian setting. Second, here we are concerned with the probability of error in identifying $\S$; distilled sensing controls the false discovery and non-discovery rates which is less demanding than the error rate control.  

To give a sense of the main results, consider the case in which  $f(\cdot|\theta)$ is a Gaussian with mean $\theta$ and variance $1$.  If $\theta_0 = 0$ and the alternative is $\theta_i = \theta_1  > 0$ for $i \in \S$, then reliable detection (probability of error tending to zero as $n\rightarrow \infty$) is \emph{not} possible using non-sequential methods if $\theta_1 < \sqrt{2\log n}$. In contrast, a sequential method that we will demonstrate is reliable as long as $\theta_1 > \sqrt{4 \log|\S|}$, where $|\S|$ is the cardinality of the support set.  This shows the sequential method is more sensitive whenever $|\S| < n^{1/2}$; i.e., the sparse setting.  The improvement is especially remarkable when $\btheta$ is very sparse; e.g., if $|\S| \approx \log n$, then sequential methods succeed as long as $\theta_1$ is larger than a constant multiple of $\sqrt{\log\log n}$. The gains provided by the sequential method are even more pronounced for certain one-sided distributions.   In spectrum sensing (where the measurements follow gamma distributions), to within constant factors, if the SNR grows as $\log (|\S| \log n)$ then the sequential method is reliable, but any non-sequential procedure is unreliable if the SNR grows slower than $\sqrt{n}$.  To dramatize this result, if $|\S| \approx \log n$, then the gap between these conditions is {\em doubly} exponential in $n$.

\section{Problem Statement} \label{sec:ProbSetup}
For $i = 1,...,n$ let $y_i$ be a random variable distributed according to (\ref{eqn:underlyingstats}).  We say component $i$ follows the null distribution if $i \not \in \S$, where $\S$ is defined in (\ref{eqn:undstats1}), thus $y_i \sim f(\cdot|\theta_0)$.  Conversely, we say that component $i$ follows the alternative if $i \in \S$, and $\theta_i \neq \theta_0$.  Define $s := |\S|$ as the level of sparsity, and assume that $s \ll n$.  Our goal is exact recovery of $\S$.

With some abuse of notation, let $\theta_1$ be a parameter corresponding to an alternative distribution (\emph{not} the parameter corresponding to component $1$ of the vector $\btheta$). For simplicity we let $\theta_i = \theta_1$ for $i \in \S$, that is, all components following the alternative follow the same distribution $f(\cdot|\theta_1)$. This allows a simple binary hypotheses to test for inclusion in $\S$.  More general consideration could test composite alternatives (in which the alternative distributions are a family of distributions with unknown $\theta_i \neq \theta_0$) -- in this setting, the results of this paper can be viewed as quantifying the \emph{minimum} separation required between the null and any of the alternative distributions.  Extensions of this sort  are obvious in the context of the applications considered in Section \ref{sec:Apps}.

To index multiple independent identically distributed observations of $y_i$, we introduce a second subscript $j$ -- $y_{i,j}$ is the $j$th observation of the $i$th component.  Define the log-likelihood ratio statistic corresponding to index $i$ as
$$T_{i,m} := \sum_{j=1}^m \log \frac{ f(y_{i,j}|\theta_1)}{f(y_{i,j}|\theta_0)} \ .  $$
The distribution of the log-likelihood ratio depends on the number of independent observations, indicated by the subscript $m$.  As both sequential and non-sequential tests compare $T_{i,m}$ to a threshold, we refer to $T_{i,m}$ as the \emph{test statistic}.

\subsection{Measurement Budget}
To compare sequential and non-sequential methods, we impose a budget on the total number of measurements.  A single measurement consists of observing, for example, $y_{1,1}$ and thus, observing $(y_{1,1},...,y_{1,n})$ requires $n$ measurements.  The total number of measurements is limited to $N \leq 2mn$, where $m\geq 1$ is an integer.

\subsection{Non-Sequential Testing}
The non-sequential approach distributes the measurement budget uniformly over the $n$ components, making $2m$ i.i.d.\ observations of each.  Let $y_{i,1},\dots,y_{i,2m}$ denote the $2m$ observations of component $i$, and let $T_{i,2m}$ denote the corresponding test statistic.  The test takes the form
\begin{eqnarray}
\label{non-seq}
T_{i,2m}& \hyp & \tau \
\end{eqnarray}
and, for some $\tau$, is optimal in terms of probability of error among all (non-sequential) component-wise estimators. The estimated support set at threshold $\tau$ is
$$\S_\tau := \{i \ : \ T_{i,2m}>\tau\} \ . $$

\subsection{Sequential Thresholding} \label{sec:SDS}

The sequential method we propose is based on the following simple bisection idea. Instead of aiming to identify the components in $\S$, at each step of the sequential procedure we aim to eliminate about $1/2$ of the remaining components \emph{not} in $\S$ from further consideration.  The components that remain under consideration after $K$ such steps is our estimate of the set $\S$.

Suppose we begin by using half of our measurement budget to collect $m$ observations of each component.  The test statistic for each is $T_{i,m}$, a function of $(y_{i,1},\dots,y_{i,m})$.  Assume $\theta_0$ is known and let $T_{i,m}|\theta_0$ denote the random variable whose distribution is that of the test statistic under the null.
Consider the threshold test
$$T_{i,m} \ > \ \mathrm{median}(T_{i,m}|\theta_0) \ . $$
For $i \not \in \S$, the test statistic $T_{i,m}$ falls below $\mathrm{median}(T_{i,m}|\theta_0)$ with probability $1/2$.
The threshold test above thus eliminates approximately $1/2$ of the components that follow the null.  We can next use a portion of our remaining budget of $mn$ to repeat the same measurement and thresholding procedure on the remaining components.  Since approximately $n/2$ components remain this will require $m n/2$ of the remaining budget.  Repeating this process for sufficiently many iterations will remove, with high probability, all of the null components. We call this process \emph{sequential thresholding} and give a formal algorithm below.  The output of the procedure, $\S_K$, is the estimated support set.  Notice that sequential thresholding does not require prior knowledge of the size of the support set.

\begin{algorithm}[h]
\caption{}
\begin{algorithmic} \label{alg:sds}
\STATE{input: $K>0$ steps, $\gamma_0 := \mathrm{median}(T_{i,m}|\theta_0)$}
\STATE{initialize: $\S_0 = \{1,...,n\}$}
\FOR{$k = 1,\dots,K$}
\FOR{$ i \in \S_{k-1}$}
\STATE{{\bf measure}: $\{y^{(k)}_{i,j}\}_{j=1}^{m} \sim \left\{
           \begin{array}{ll}
             \prod_{j =1}^{m} f(y_{i,j}^{(k)}|\theta_0) & i \not \in \mathcal{S} \\
             \prod_{j =1}^{m} f(y_{i,j}^{(k)}|\theta_1) & i \in \mathcal{S}
           \end{array}
         \right.$ }
\STATE{{\bf threshold}: $\S_k \ := \ \{i\in\S_{k-1} \ : T_{i,m}^{(k)} > \gamma_0 \}$}
\ENDFOR
\ENDFOR
\STATE{output: $\S_K$}
\end{algorithmic}
\end{algorithm}

While sequential thresholding is described and analyzed using a threshold at the median of the null, in practice, other thresholds can be used (for example, a threshold at the $95$ percentile) and can result in improved performance.

\subsection{Sequential Thresholding Satisfies Budget}
The number of measurements used by sequential thresholding satisfies the overall measurement budget $N\leq 2mn$ in expectation.  The expected number of measurements is
\begin{eqnarray*}
  \E\left[\sum_{k=0}^{K-1}|\S_k| \right]  &\leq & \sum_{k=0}^{K-1} \left(\frac{m(n-s)}{2^k} + ms\right) \\
  & \leq & 2m(n-s) + msK \
\end{eqnarray*}
since, in expectation, we eliminate half of the remaining null components (and perhaps some following the alternative, hence the first inequality) on each of the $K$ passes.  Our interest is in high-dimensional limits of $n$ and $s$.  Suppose that $sK$ grows sublinearly with $n$.  Then for any $\epsilon >0$ there exists an $N_\epsilon$ such that  $\E\left[\sum_{k=0}^{K-1}|\S_k| \right]  \leq 2(1+\epsilon)mn$ for every $n>N_\epsilon$.  We suppress the factor $1+\epsilon$ as we proceed as it does not effect our results.

\subsection{Implementations}

There are two possible implementations of sequential thresholding which we refer to as {\em parallel} and {\em scanning}. \\ \vspace{-.12in} \\
\noindent {\bf parallel}: The parallel implementation measures and tests all $n$ components in parallel according to the procedure. \\  \vspace{-.12in} \\
\noindent {\bf scanning}:  The scanning implementation measures and tests the $n$ components in a sequence (which can be arbitrary).  For example, the scanning implementation can begin with component $i=1$ and repeatedly measure and threshold the observations up to $K$ times.  If an observation falls below the threshold at any point, then the scanning procedure immediately moves on to the next component.  If $K$ observations are made without an observation falling below the threshold, then the component is added to the set $S_K$.  The expected number of observations obeys the same bound as derived above. \\  \vspace{-.12in} \\
\noindent The two implementations are equivalent from a theoretical perspective.  The parallel implementation may be more natural for large-scale experimental designs (e.g., in the biological sciences), whereas the scanning implementation is more appropriate in communications applications such as spectrum sensing.  The latter also reveals natural connections between sequential thresholding and sequential probability ratio tests.

\subsection{Connection to Sequential Probability Ratio Tests}

As we will show in the following section, in the high-dimensional limit ($n\rightarrow \infty$) sequential thresholding can drive the probability of error to zero if the divergence between the null and alternative distributions is $\log |\S|$ times a small constant.  This specializes in the Gaussian setting to the requirement that the difference between the means is at least $\sqrt{4 \log |\S|}$, which compares favorably to the requirement that the difference exceeds $\sqrt{2\log n}$ for non-sequential methods.

In fact, the $\log |\S|$ dependence of sequential thresholding is optimal, up to constant factors.
This follows from well-known results in sequential testing.  Let $\widehat \S$ denote the result of any testing procedure based on $n$ local (component-wise) tests of the form $H_0$: $i \not \in \S$ against $H_1:$ $i\in \S$.  Each test is based on the sequential observations $y_{i,1},y_{i,2},\dots,y_{i,N}$, and the {\em stopping time} of the test is the value of $N$ (possibly random) when the decision is made.

Suppose that each individual test has false-positive and false-negative error probabilities less than $\alpha:=\epsilon/(n-|\S|)$ and $\beta:=\epsilon/|\S|$, respectively.  Then the expected total number of errors is $\E|\widehat S \cap \S^C|+\E|\widehat S^{C}\cap \S| \leq 2\epsilon$.  It is necessary that this expected number tend to zero in order for the probability of error, $\P(\widehat S\neq \S)$, to tend zero.  With the above specifications for the two types of error, it is possible to design a sequential probability ratio test (SPRT) for each component.

The SPRT computes a sequence of likelihood ratios, where $\ell_{i,n}$ is the likelihood
ratio of $y_{i,1},\dots,y_{i,n}$, $n\geq 1$.  The SPRT terminates when $\ell_{i,n}\geq B$ or $\ell_{i,n}\leq A$, where the thresholds $A$ and $B$ are determined by the equations
$\alpha = B^{-1}(1-\beta)$ and $\beta = A(1-\alpha)$ (see \cite{SeqAnalysis} p. 11).  Note that, unlike sequential thresholding, the SPRT requires knowlege of both distributions as well as the level of sparsity.
Since such information is usually unavailable in applications, we advocate the use of sequential thresholding instead; it requires only crude knowledge of the null and nothing about the
alternative or sparsity level.

From the Wald equation, the expected stopping time of the SPRT per index is (approximately) \cite{SeqAnalysis}
\begin{eqnarray*}
\E_0 [N'] & \cong & \mu_0^{-1} \left\{ \alpha \log\left(\frac{1-\beta}{\alpha}\right) \, + \, (1-\alpha) \log\left(\frac{\beta}{1-\alpha}\right)\right\} \\
\E_1 [N'] & \cong & \mu_1^{-1} \left\{ (1-\beta) \log\left(\frac{1-\beta}{\alpha}\right) \, + \, \beta \log\left(\frac{\beta}{1-\alpha}\right) \right\}  \
\end{eqnarray*}
where $\E_i$ denotes the expectation under $f(\cdot|\theta_i)$ and $\mu_i := \E_i\left[\log\frac{f(y|\theta_1)}{f(y|\theta_0)}\right]$, $i=0,1$.
In our case $\alpha = \epsilon/(n-|\S|)$ and $\beta = \epsilon/|\S|$, and as $\epsilon \rightarrow 0$
we have
\begin{eqnarray*}
\E_0 [N'] & \cong & \mu_0^{-1} \log \frac{\epsilon}{|\S|} \\
\E_1 [N'] & \cong & \mu_1^{-1} \log \frac{n-|\S|}{\epsilon} \ .
\end{eqnarray*}
If $|\S|\ll n$, then the expected total number of measurements of made by all $n$ SPRTs is
\begin{eqnarray*}
E[N] = (n-|\S|) \E_0 [N'] + |\S| \E_1 [N'] & \cong & \frac{n}{\mu_0} \log\frac{\epsilon}{|\S|} \ .
\end{eqnarray*}
Note that $\mu_0 = -D_0:= -D(f(\cdot|\theta_0)||f(\cdot|\theta_1))$, the KL divergence of $f(\cdot|\theta_1)$ from $f(\cdot|\theta_0)$, so expected total number of observations made by the $n$ SPRTs is
$$E[N] \cong \frac{n}{D_0} \log\frac{|\S|}{\epsilon} \ . $$
It follows from the optimality of the SPRT that no other component-wise testing procedure with $\epsilon$ error-rate requires fewer observations.
Now let us constrain this expected total to be less than or equal to $2mn$.  This yields a necessary condition for controlling the probability of error of any sequential test:
$$D(f(\cdot|\theta_0)||f(\cdot|\theta_1)) \ \gtrsim \ \frac{1}{2m}\log \frac{|\S|}{\epsilon} \ . $$

\section{Main Results}
The main results rely on the extremal properties of the test statistic.  We say that a testing procedure is \emph{reliable} if it drives the probability of error to zero in the high-dimensional limit.  More formally, consider a sequence of multiple testing problems indexed by dimension $n$.  Let $\S(n)$ denote the true support set and let $\widehat{\S}(n) = \S_\tau$ (non-sequential procedure with threshold $\tau$) or $\widehat{\S}(n) = \S_K$ (sequential procedure with $K$ passes).  We define a notion of reliability as follows.

\begin{definition}{\bf (Reliability)} Let ${\cal E}$ denote the error event $\{\widehat{\S}(n) \neq \S(n)\}$. We say that the support set estimator $\widehat{\S}(n)$ is {\em reliable} if
$\lim_{n\rightarrow\infty} \P({\cal E}) = 0$; conversely, an estimator $\widehat{\S}(n)$ is \emph{unreliable} if $\lim_{n\rightarrow\infty} \P({\cal E}) > 0$.
\end{definition}

To simplify notation we will not explicitly indicate the dependence of the statistics on $n$. We show that the non-sequential testing procedure in (\ref{non-seq})  is {\em unreliable} at every threshold level $\tau$ if
\begin{eqnarray} \label{eqn:an}
 \lim_{n\rightarrow \infty}\mathbb{P}\left(\frac{\max_{i\not \in \S}  T_{i,2m}}{\mathrm{median}( T_{i,2m}|\theta_1)} \geq 1\right) = 1  \ .
\end{eqnarray}
Conversely, sequential testing according to sequential thresholding is \emph{reliable} if
\begin{eqnarray}  \label{eqn:bn}
\lim_{n\rightarrow \infty}\mathbb{P}\left(\frac{\min_{k=1}^K \min_{i \in \S}T_{i,m}^{(k)} }{\mathrm{median}\left( T_{i,m} |\theta_0 \right)} \leq 1\right) = 0 \ ,
\end{eqnarray}
and $K = (1+\epsilon)\log_2 n$, for any $\epsilon >0$.  We are interested in ranges of $\theta_1 >\theta_0$ that satisfy the conditions above.  In many cases of interest, (\ref{eqn:an}) and (\ref{eqn:bn}) hold simultaneously for a wide range parameter values. This  implies that there are many regimes in which sequential methods are reliable, but non-sequential methods are not.

For example, we show in Section~\ref{sec:AWGN} that if the underlying component distributions are unit variance Gaussian with means $\theta_0=0$ and $\theta_1>0$, then the non-sequential procedure (\ref{non-seq}) is unreliable if $\theta_1 < \sqrt{\frac{1}{m}\log n}$ whereas sequential thresholding is reliable if $\theta_1 \geq \sqrt{\frac{2}{m}\log(s \log_2 n)}$.  The size of the sparse support, $s$, is typically much smaller than the overall dimension $n$, and so there are many cases in which the sequential method is \emph{reliable} but the non-sequential method is \emph{unreliable}. The gap between the two conditions can be exponentially large in terms of the dimension $n$. As a specific example, if $s = \log n$, then the sequential method is reliable if $\theta_1 \geq 2 \sqrt{\log\log n}$ and the non-sequential method is unreliable if $\theta_1 \leq \sqrt{2 \log n}$.




\subsection{Limitation of Non-Sequential Testing}
\begin{thm} \label{lemma:NAanbn}
If (\ref{eqn:an}) holds, then the non-sequential procedure in (\ref{non-seq}) is unreliable.  Specifically, if ${\cal E}_\tau$ is the error event $\{\S_\tau \neq \S\}$, then for every $\tau$
\begin{eqnarray*}
\lim_{n\rightarrow \infty}\P\left( {\cal E}_\tau\right) & \geq & \frac{1}{2}.
\end{eqnarray*}
\end{thm}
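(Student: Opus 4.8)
The plan is to exploit the fact that the threshold test in (\ref{non-seq}) can succeed only when the null and alternative test statistics are perfectly separated by $\tau$, and to show that at least one of two unavoidable failure modes occurs with probability bounded below by $1/2$. Concretely, $\S_\tau = \S$ requires every alternative statistic to exceed $\tau$ while every null statistic falls at or below it, i.e.
$$\max_{i \notin \S} T_{i,2m} \ \leq \ \tau \ < \ \min_{i \in \S} T_{i,2m} \ .$$
I would therefore lower bound $\P(\mathcal{E}_\tau)$ by the probability that this separation fails, splitting the analysis according to where $\tau$ sits relative to $\gamma_1 := \mathrm{median}(T_{i,2m}|\theta_1)$.

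First, suppose $\tau \geq \gamma_1$. Fixing any single index $i^\star \in \S$ (the support is nonempty by the setup in Section~\ref{sec:ProbSetup}), the definition of the median gives $\P(T_{i^\star,2m} \leq \gamma_1) \geq 1/2$, hence $\P(T_{i^\star,2m} \leq \tau) \geq 1/2$. On this event $i^\star \in \S$ but $i^\star \notin \S_\tau$, a false negative, so $\mathcal{E}_\tau$ occurs and $\P(\mathcal{E}_\tau) \geq 1/2$. The key point is that I do not need to control the full minimum over $\S$: a single misclassified alternative already forces an error, which is precisely what makes the constant $1/2$ drop out of the median.

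Second, suppose $\tau < \gamma_1$. Here I invoke the hypothesis (\ref{eqn:an}): with probability tending to one, $\max_{i\notin\S} T_{i,2m} \geq \gamma_1 > \tau$, so some null index exceeds $\tau$ and is wrongly placed in $\S_\tau$, a false positive that again forces $\mathcal{E}_\tau$. Writing $p_n := \P(\max_{i\notin\S}T_{i,2m} \geq \gamma_1)$, this yields $\P(\mathcal{E}_\tau) \geq p_n$ with $p_n \to 1$ by (\ref{eqn:an}). Combining the two cases, for every $n$ and every (possibly $n$-dependent) $\tau$ we get $\P(\mathcal{E}_\tau) \geq \min\{1/2,\,p_n\}$, and letting $n \to \infty$ gives $\liminf_n \P(\mathcal{E}_\tau) \geq 1/2$, which is the stated bound.

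The argument is short, so there is no single heavy computation; the point requiring care is that the bound must hold uniformly over all thresholds, including sequences $\tau = \tau(n)$ chosen adversarially to track $\gamma_1$. The case split on $\tau$ versus $\gamma_1$ resolves this cleanly because it is performed pointwise in $n$ and produces a threshold-independent lower bound $\min\{1/2,p_n\}$. The only structural facts I rely on are that $\S$ is nonempty and that the alternative statistics share the common distribution defining $\gamma_1$, both guaranteed by the assumption $\theta_i = \theta_1$ for $i\in\S$.
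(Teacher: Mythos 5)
Your argument is correct and is essentially the paper's proof: both hinge on the median of the alternative statistic, obtaining a false-negative probability of at least $1/2$ when $\tau$ is at or above that median and invoking condition (\ref{eqn:an}) to get a false-positive probability $p_n \to 1$ when $\tau$ is below it, then combining via $\min\{1/2, p_n\}$. Your version merely makes the case split on $\tau$ versus $\mathrm{median}(T_{i,2m}|\theta_1)$ explicit where the paper states the resulting bound directly.
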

\begin{proof}
The non-sequential testing procedure accepts the null hypothesis if the test statistic $T_{i,2m}$ is less than some threshold, $\tau$, and conversely, rejects the null hypothesis if $T_{i,2m} \geq \tau$.  The probability of error at threshold level $\tau$ is
\begin{eqnarray*} \nonumber
 \P\left( {\cal E}_\tau\right) & =  &  \mathbb{P}\left(\bigcup_{i \not \in \mathcal{S}}\{T_{i,2m} \geq \tau\} \bigcup_{i  \in \mathcal{S}} \{T_{i,2m} < \tau\} \right) \, ,   \label{eqn:PEhalf}
\end{eqnarray*}
and the {\em minimum} probability of error is $\min_\tau \P\left( {\cal E}_\tau\right) $.
Now suppose we take $\tau = \mbox{median}(T_{i,2m}|\theta_1)$, the median value of the test statistic under the alternative.  At this threshold level, the false-negative rate would be $1/2$, and so the overall probability of error would be at least $1/2$. It follows that the minimum probability of error can be bounded from below by
\begin{eqnarray*} \nonumber
  \min_\tau \P\left( {\cal E}_\tau\right)  \geq   \min\left(1/2 \, , \, \mathbb{P}(\cup_{i \not \in \mathcal{S}}\{T_{i,2m} \geq \mbox{median}(T_{i,2m}|\theta_1 )\}\right)  .
\end{eqnarray*}
According to (\ref{eqn:an}) the second argument above tends to $1$ as
$n\rightarrow \infty$, which completes the proof.
\end{proof}

\subsection{Capability of Sequential Thresholding}
\begin{thm} \label{lemma:adapt}
If (\ref{eqn:bn}) holds, then sequential thresholding is reliable if $K = (1+\epsilon)\log_2 n$, for $\epsilon >0$.  Specifically, if ${\cal E}_\epsilon$ is the error event $\{\S_K \neq \S\}$, then for any $\epsilon >0$
\begin{eqnarray*}
\lim_{n\rightarrow \infty} \P\left({\cal E}_\epsilon \right) & = &0.
\end{eqnarray*}
\end{thm}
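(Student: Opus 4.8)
The plan is to decompose the error event $\mathcal{E}_\epsilon = \{\S_K \neq \S\}$ into two disjoint sources of error and bound each separately. A recovery error can occur only if either (i) some true component $i \in \S$ is eliminated at some pass $k \le K$ (a \emph{false negative}), or (ii) some null component $i \notin \S$ survives all $K$ passes (a \emph{false positive}). A union bound then gives $\P(\mathcal{E}_\epsilon) \le \P(\text{false negative}) + \P(\text{false positive})$, so it suffices to show that each term vanishes as $n \to \infty$.

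First I would dispatch the false negatives, which are handled directly by the hypothesis. A component $i \in \S$ is eliminated at pass $k$ precisely when its statistic fails the threshold, $T_{i,m}^{(k)} \le \gamma_0$. Hence the false-negative event is exactly $\{\min_{k \le K} \min_{i \in \S} T_{i,m}^{(k)} \le \gamma_0\}$, and since $\gamma_0 = \mathrm{median}(T_{i,m}|\theta_0)$ this is, in its normalized form, the event appearing in (\ref{eqn:bn}). By assumption its probability tends to zero, so this contribution vanishes with no further work.

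The substance lies in the false positives. Fix a null index $i \notin \S$. At each pass in which $i$ is still under consideration it is remeasured with $m$ \emph{fresh} observations, so $T_{i,m}^{(k)}$ is drawn i.i.d.\ from the null distribution, independently across passes. By the definition of the median, $\P(T_{i,m}^{(k)} > \gamma_0 \mid \theta_0) \le 1/2$ at every pass, regardless of the outcomes of earlier passes. Consequently the probability that $i$ clears the threshold on all $K$ consecutive passes is at most $2^{-K}$, and a union bound over the at most $n$ null components yields $\P(\text{false positive}) \le n\, 2^{-K}$. Substituting $K = (1+\epsilon)\log_2 n$ gives $n\, 2^{-K} = n \cdot n^{-(1+\epsilon)} = n^{-\epsilon} \to 0$. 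Combining the two bounds completes the argument.

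I expect the one point requiring care to be the independence structure in the false-positive step: because measurements are gathered adaptively and only on the surviving set $\S_{k-1}$, one must verify that conditioning on a null component having survived passes $1,\dots,k-1$ does not inflate its pass-$k$ survival probability above $1/2$. This is exactly where drawing fresh observations at each pass is essential: it makes the per-pass survival events independent, each with probability at most $1/2$, so the geometric decay $2^{-K}$ holds. The choice $K = (1+\epsilon)\log_2 n$ is then precisely what renders $n\,2^{-K}$ negligible against the $n$-fold union bound.
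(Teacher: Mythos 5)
Your proposal is correct and follows essentially the same route as the paper's proof: the same union-bound decomposition into false negatives (handled directly by condition (\ref{eqn:bn})) and false positives (bounded by $(n-|\S|)2^{-K}$ via the median property and independence of fresh measurements across passes, which vanishes for $K=(1+\epsilon)\log_2 n$). Your closing remark about verifying that survival of earlier passes does not inflate the pass-$k$ survival probability is exactly the point the paper relies on implicitly when it raises the single-pass probability to the $K$th power.
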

\begin{proof}
The probability of error is
\begin{eqnarray}
    \P({\cal E}_\epsilon) &:=&\mathbb{P} \left( {\Sc}_{K} \neq \Sc \right) \nonumber \\
    &=& \mathbb{P}\left(\left\{\Sc \cap {\Sc}_{K}^c \neq \emptyset  \right\} \cup \left\{\Sc^c \cap {\Sc}_{K} \neq  \emptyset  \right\} \right) \label{eqn:PeDecomp} \nonumber \\
    &\leq& \mathbb{P}\left(\Sc \cap {\Sc}_{K}^c \neq \emptyset  \right) +\mathbb{P} \left(\Sc^c \cap {\Sc}_{K} \neq  \emptyset  \right) \ ,
  \end{eqnarray}
  where the superscript $c$ denotes the complementation of the set.  The upper bound on the probability of error consists of two terms, the false-negative and false-positive probabilities.  The false positive probability (second term in (\ref{eqn:PeDecomp})) can be bounded as follows:
  \begin{eqnarray*}
 &&\hspace{-1cm} \mathbb{P}\left( \Sc^c \cap {\Sc}_{K} \neq  \emptyset  \right)  \\
 & =& \mathbb{P} \left( \bigcup_{i \not \in \mathcal{\Sc}} \bigcap_{k=1}^{K} \left\{T_{i,m}^{(k)} \geq \mathrm{median}\left( T_{i,m}| \theta_0\right)\right\} \right) \\
 &\leq& \sum_{i\not \in \mathcal{\Sc}} \left(\mathbb{P} \left( T_{i,m}^{(1)} \geq \mathrm{median}\left( T_{i,m} |\theta_0\right) \right)\right)^K \\
 &=&   \frac{n-|\S|}{2^K} \label{eqn:p2sim}
\end{eqnarray*}
where the last step follows since the probability a random variable exceeds its median is $1/2$.  Since $K = (1+\epsilon) \log_2 n$, with $\epsilon>0$, we have
\begin{eqnarray*} \label{eqn:falsePosContro}
  \lim_{n \rightarrow \infty}  \mathbb{P}\left( \S^c \cap \S_K \neq  \emptyset  \right) = 0 \ .
\end{eqnarray*}

Bounding the false-negative probability (first term  in (\ref{eqn:PeDecomp})) depends on the distribution of the test statistic under the alternative $\theta_1$:
\begin{eqnarray*} \label{eqn:P1sim} \nonumber
&&\hspace{-1cm}\mathbb{P}\left(S \cap {S}_{K}^c \neq \emptyset  \right)   \\
&=& \mathbb{P}\left( \bigcup_{k=1}^K \bigcup_{i\in \mathcal{S}} \left\{T_{i,m}^{(k)} \leq \mathrm{median} \left(T_{i,m}| \theta_0 \right)\right\} \right) \\ \label{eqn:adpmed}
&=& \mathbb{P}\left( \min_{k=1}^{K} \min_{i\in \S} T_{i,m}^{(k)} \leq \mathrm{median} \left(T_{i,m}| \theta_0 \right) \right)
\end{eqnarray*}
which, from (\ref{eqn:bn}), goes to zero in the limit, completing the proof.
\end{proof}

\section{Applications}
\label{sec:Apps}
To illustrate the main results we consider three canonical settings arising in high-dimensional multiple testing.  We again have in mind a sequence of problems
and consider behavior in the high-dimensional limit.  Thus, when we write $\theta \leq g(n)$ (or $\theta\geq g(n)$) we mean that the parameter $\theta$ may (must) grow with dimension $n$ no faster (slower) than the function $g(n)$.

\subsection{Gaussian Model} \label{sec:AWGN}
Gaussian noise models are commonly assumed in multiple testing problems arising in the biological sciences (e.g., testing which of many genes or proteins are involved in a certain process or function).  For example, a multistage testing procedure similar in spirit to sequential thresholding was used to determine genes important for virus replication  in \cite{hao}. Consider a high-dimensional hypothesis test in additive Gaussian noise where the parameter $\btheta$ represents the mean of the distribution. We assume the null hypothesis follows zero mean ($\theta_0 = 0$), unit variance gaussian statistics; the alternative hypothesis, mean $\theta_1 > 0$, unit variance:
\begin{eqnarray*} \label{eqn:Gaussstats}
    y_i  \sim \left\{
           \begin{array}{ll}
             {\cal{N}}(0,1) \, ,  &  i \not \in \mathcal{S} \\
             {\cal{N}}(\theta_1,1) \, , & i \in \mathcal{S}.
           \end{array}
         \right.
\end{eqnarray*}

\subsubsection{{Non-Sequential Testing}}
We make $2m$ measurements of each component of $\btheta$.  The test statistic again follows a normal distribution:
\begin{eqnarray} \label{eqn:GausteststatsNA}
    T_{i,2m} = \frac{1}{2m}\sum_{j=1}^{2m} y_{i,j}    \sim \left\{
           \begin{array}{ll}
             {\cal{N}}\left(0,\frac{1}{2m}\right)  \, ,  & i \not \in \mathcal{S} \\
             {\cal{N}}\left(\theta_1,\frac{1}{2m}\right)  \, ,  & i \in \mathcal{S}.
           \end{array}
         \right.
\end{eqnarray}
\begin{cor} \label{cor:gausNS}
If $\theta_1 < \sqrt{\frac{\log (n-s)}{m} }$, then the non-sequential testing procedure in (\ref{non-seq}) is unreliable, i.e., $\min_{\tau} \mathbb{P}({\cal{E_{\tau}}}) \geq 1/2$.
\end{cor}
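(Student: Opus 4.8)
The plan is to deduce the result directly from Theorem~\ref{lemma:NAanbn}, which already yields $\min_\tau \P(\mathcal{E}_\tau) \geq 1/2$ once condition~(\ref{eqn:an}) is verified; so the entire task reduces to checking~(\ref{eqn:an}) for the Gaussian model. First I would identify the denominator appearing in~(\ref{eqn:an}). By~(\ref{eqn:GausteststatsNA}), under the alternative $T_{i,2m}|\theta_1 \sim \mathcal{N}(\theta_1,\tfrac{1}{2m})$, and since a Gaussian's median equals its mean, $\mathrm{median}(T_{i,2m}|\theta_1) = \theta_1$. Condition~(\ref{eqn:an}) therefore becomes
\begin{eqnarray*}
\lim_{n\rightarrow\infty} \P\left(\max_{i\not\in\S} T_{i,2m} \geq \theta_1\right) = 1 ,
\end{eqnarray*}
equivalently $\P\left(\max_{i\not\in\S} T_{i,2m} < \theta_1\right) \rightarrow 0$.

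The next step is to exploit independence of the null statistics. The $n-s$ variables $\{T_{i,2m}\}_{i\not\in\S}$ are i.i.d.\ $\mathcal{N}(0,\tfrac{1}{2m})$, so $\sqrt{2m}\,T_{i,2m}$ is standard normal and
\begin{eqnarray*}
\P\left(\max_{i\not\in\S} T_{i,2m} < \theta_1\right) = \left(\Phi\!\left(\sqrt{2m}\,\theta_1\right)\right)^{n-s} ,
\end{eqnarray*}
where $\Phi$ is the standard normal CDF. It thus suffices to show that this product vanishes whenever $\theta_1 < \sqrt{\tfrac{\log(n-s)}{m}}$.

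The core of the argument is a standard Gaussian extreme-value estimate. Setting $x := \sqrt{2m}\,\theta_1$, the hypothesis reads $x < \sqrt{2\log(n-s)}$. Using $\log(1-u)\leq -u$ I would bound $(\Phi(x))^{n-s} \leq \exp\!\left(-(n-s)(1-\Phi(x))\right)$, and then apply the Mills-ratio lower bound $1-\Phi(x) \geq \tfrac{x}{1+x^2}\,\phi(x)$ with $\phi(x)=(2\pi)^{-1/2}e^{-x^2/2}$ to obtain $(n-s)(1-\Phi(x)) \gtrsim (n-s)\,e^{-x^2/2}/x$. If $x$ stays below $\sqrt{2\log(n-s)}$ by a constant factor $c<1$, then $e^{-x^2/2}\geq (n-s)^{-c^2}$ and hence $(n-s)(1-\Phi(x)) \gtrsim (n-s)^{1-c^2}/\sqrt{2\log(n-s)} \rightarrow \infty$, forcing the product to $0$ and establishing~(\ref{eqn:an}).

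The main obstacle is the delicacy of the strict inequality: the maximum of $n-s$ i.i.d.\ $\mathcal{N}(0,\tfrac{1}{2m})$ variables concentrates sharply at exactly $\sqrt{\tfrac{\log(n-s)}{m}}$, so the conclusion is borderline right at the threshold. The cleanest resolution is to read the hypothesis in the paper's asymptotic sense---$\theta_1$ growing strictly slower than $\sqrt{\log(n-s)/m}$, i.e.\ $\limsup_n x/\sqrt{2\log(n-s)} < 1$---under which the estimate above closes cleanly; keeping track of the factor $c<1$ is then the only genuine bookkeeping, the remaining steps being routine.
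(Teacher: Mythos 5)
Your proposal is correct and takes essentially the same route as the paper: identify $\mathrm{median}(T_{i,2m}|\theta_1)=\theta_1$, verify condition~(\ref{eqn:an}) from the extremal behavior of the $n-s$ i.i.d.\ null statistics, and conclude via Theorem~\ref{lemma:NAanbn}. The only difference is that where the paper cites \cite{lea:lin:roo83} for the Gaussian extreme-value fact you derive it directly from a Mills-ratio bound, and your caveat about the borderline case $\theta_1 \sim \sqrt{\log(n-s)/m}$ is consistent with the asymptotic reading of such inequalities that the paper announces at the start of Section~\ref{sec:Apps}.
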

\begin{proof}
For the test statistic in equation (\ref{eqn:GausteststatsNA}), we satisfy (\ref{eqn:an}) provided $\mathrm{median}\left(T_{i,2m}| \theta_1 \right) \leq \sqrt{\frac{\log (n-s)}{m} }$ (see, for example \cite{lea:lin:roo83}).  By Theorem~\ref{lemma:NAanbn} and since  $\mathrm{median}\left(T_{i,2m}| \theta_1 \right) = \theta_1$, if
\begin{eqnarray*}
  \theta_1 \leq \sqrt{\frac{ \log (n-s) }{m} }
\end{eqnarray*}
then non-sequential thresholding is unreliable.
\end{proof}

\subsubsection{{Sequential Testing}}
Sequential thresholding makes $m$ measurements of each component in the set $\S_k$ at each step.  The test statistic follows a normal distribution:
\begin{eqnarray} \label{eqn:GausteststatsAd}
    T_{i,m}^{(k)} = \frac{1}{m}\sum_{j=1}^{m} y_{i,j}   \sim \left\{
           \begin{array}{ll}
             {\cal{N}}\left(0,\frac{1}{m}\right)  & i \not \in \mathcal{S} \\
             {\cal{N}}\left(\theta_1,\frac{1}{m}\right)  & i \in \mathcal{S}.
           \end{array}
         \right.
\end{eqnarray}
\begin{cor} \label{cor:gausS}
If $\theta_1 > \sqrt{\frac{2}{m}\log (s \log_2 n)}$, then
sequential thresholding is reliable.
\end{cor}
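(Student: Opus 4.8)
The plan is to invoke Theorem~\ref{lemma:adapt}, which already reduces reliability of sequential thresholding to the single limit (\ref{eqn:bn}); equivalently, as its proof shows, it suffices to verify that the false-negative probability $\P\big(\min_{k=1}^{K}\min_{i\in\S}T_{i,m}^{(k)}\le\mathrm{median}(T_{i,m}|\theta_0)\big)$ tends to zero. So the whole task specializes the general machinery to the Gaussian statistic (\ref{eqn:GausteststatsAd}). First I would pin down the threshold: under the null $T_{i,m}^{(k)}\sim\mathcal{N}(0,1/m)$, so $\mathrm{median}(T_{i,m}|\theta_0)=0$, and the condition to verify becomes
$$\P\Big(\min_{k=1}^{K}\min_{i\in\S}T_{i,m}^{(k)}\le 0\Big)\longrightarrow 0 .$$
That is, I must show that with probability tending to one none of the $s$ alternative components ever produces a test statistic at or below $0$ across the $K$ passes.

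Next I would apply a union bound. At most $Ks$ of the statistics $T_{i,m}^{(k)}$ with $i\in\S$ are ever computed (a component in $\S$ that is erroneously eliminated at some pass simply contributes fewer terms, so $Ks$ is a valid upper bound on the number of tests), and by (\ref{eqn:GausteststatsAd}) each such statistic is distributed as $\mathcal{N}(\theta_1,1/m)$. Hence
$$\P\Big(\min_{k,i}T_{i,m}^{(k)}\le 0\Big)\le Ks\,\P\big(\mathcal{N}(\theta_1,1/m)\le 0\big)=Ks\,Q(\theta_1\sqrt{m}),$$
where $Q$ is the standard normal upper-tail. The per-test false-negative probability is then controlled by the elementary Gaussian tail bound $Q(x)\le\tfrac12 e^{-x^2/2}$, yielding a bound of the form $Ks\cdot\tfrac12\,e^{-m\theta_1^2/2}$.

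Finally I would substitute $K=(1+\epsilon)\log_2 n$, so that $Ks=(1+\epsilon)s\log_2 n$, together with the hypothesis $\theta_1>\sqrt{(2/m)\log(s\log_2 n)}$, which forces $e^{-m\theta_1^2/2}<(s\log_2 n)^{-1}$ and collapses the product to order one. The main point requiring care is precisely this boundary behaviour: at the exact threshold the union bound only yields a constant, not decay. I therefore expect the genuine obstacle to be the constant-versus-margin issue, resolved by using the \emph{strict} inequality in the paper's asymptotic growth-rate sense, so that the exponent margin $m\theta_1^2/2-\log(s\log_2 n)\to\infty$; with this margin the bound $\tfrac{1+\epsilon}{2}e^{-(m\theta_1^2/2-\log(s\log_2 n))}$ decays to zero, establishing (\ref{eqn:bn}). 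With (\ref{eqn:bn}) verified, Theorem~\ref{lemma:adapt} delivers reliability and the corollary follows; everything apart from the margin argument is a routine union-bound and Gaussian-tail calculation.
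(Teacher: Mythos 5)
Your proposal is correct and follows essentially the same route as the paper: reduce to condition (\ref{eqn:bn}) via Theorem~\ref{lemma:adapt}, use $\mathrm{median}(T_{i,m}|\theta_0)=0$, and show the minimum of the $Ks$ statistics $\mathcal{N}(\theta_1,1/m)$ stays above zero whenever $\theta_1 \geq \sqrt{\tfrac{2}{m}\log Ks}$. The only difference is cosmetic --- the paper cites extreme-value results for Gaussian maxima where you carry out the union bound and tail estimate explicitly --- and your remark that the strict inequality must be read in the paper's asymptotic growth-rate sense (so the exponent margin diverges) is precisely the point the paper's citation implicitly supplies.
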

\begin{proof}In this case, equation (\ref{eqn:bn}) is satisfied provided $\mbox{median}(T_{i,m}|\theta_0) \leq {\theta_1} - \sqrt{\frac{2\log Ks}{m}}$ (see for example \cite{lea:lin:roo83}).   Since $\mbox{median}(T_{i,m}|\theta_0) = 0$, Theorem \ref{lemma:adapt} tells us that provided
\begin{eqnarray}
 \theta_1 \geq \sqrt{\frac{2}{m}{\log Ks}}
\end{eqnarray}
with $K = (1+ \epsilon) \log_2 n$, we reliably recover $\mathcal{S}$.

\end{proof}

\subsection{Gamma Model: Spectrum Sensing} \label{sec:SS}
Often termed \emph{hole detection}, the objective of spectrum sensing is to identify unoccupied communication bands in the electromagnetic spectrum.  Most of the bands will be occupied by {\em primary} users, but these users may come and go, leaving certain bands momentarily open and available for {\em secondary} users.  Recent work in spectrum sensing has given considerable attention to such scenarios, including some work employing adaptive sensing methods (see, for example \cite{Castro}, \cite{5454086}).

Following the notation throughout this paper, channel occupation is parameterized by $\btheta$, with $\theta_0$ denoting the \emph{signal plus noise} power in the occupied bands, and $\theta_1$ representing the \emph{noise only} power in the un-occupied bands.  Without loss of generality, we let $\theta_1 = 1$.  The statistics follow a complex Gaussian distribution -- $y_i \sim {\cal{CN}}\left(0,\theta\right)$.  From \cite{1447503}, making $m$ measurements of each index, the likelihood ratio test statistic follows a $\mathrm{Gamma}$ distribution:
\begin{eqnarray} \label{eqn:GammateststatsNA}
T_{i,m}^{(k)} = \sum_{j=1}^m |y_{i,j}|^2 \sim
\left\{
  \begin{array}{ll}
    \mathrm{Gamma}\left({m},\theta_0\right) & i \not \in \mathcal{S} \\
    \mathrm{Gamma}\left({m},1\right) & i \in \mathcal{S}.
  \end{array}
\right.
\end{eqnarray}
Remarkably, the sequential testing procedure is reliable, to within constant factors, if $\theta_0$ grows as $\log (s \log_2n)$, but the non-sequential testing procedure is unreliable if $\theta_0$ grows as $(n-s)^{\frac{1}{2m}}$.  This implies, if $s = \log n$, then the gap between these conditions is {\em doubly} exponential in $n$.

Since we are interested in detecting the sparse set of vacancies in the spectrum, our hypothesis test is reversed.  We reject the null hypothesis (occupied component) if the
test statistic falls {\em below} (rather than above) a certain threshold.  In this case, the inequalities in the key conditions (\ref{eqn:an}) and (\ref{eqn:bn}) are reversed: specifically, the non-sequential thresholding procedure is \emph{unreliable} if
\begin{eqnarray} \label{eqn:an2}
 \lim_{n\rightarrow \infty}\mathbb{P}\left(\frac{\min_{i\not \in \S}  T_{i,2m}}{\mathrm{median}( T_{i,2m}|\theta_1)} \leq 1\right) = 1  \
\end{eqnarray}
and sequential thresholding is \emph{reliable} if
\begin{eqnarray}  \label{eqn:bn2}
\lim_{n\rightarrow \infty}\mathbb{P}\left(\frac{\max_{k=1}^K \max_{i \in \S}T_{i,m}^{(k)} }{\mathrm{median}\left( T_{i,m} |\theta_0 \right)} \geq 1\right) = 0 \ .
\end{eqnarray}

\subsubsection{{Non-Sequential Testing}}
In the non-sequential procedure (\ref{non-seq}), we make $2m$ measurements per index.  The distribution of the test statistic follows a gamma distribution with shape parameter $2m$.
\begin{cor}
If $\theta_0 < 2(m-1) (n-s)^{\frac{1}{2m}}$, then the non-sequential procedure in (\ref{non-seq}) is unreliable.
\end{cor}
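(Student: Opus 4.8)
The plan is to follow the template of Corollary~\ref{cor:gausNS}, but with the extreme-value behaviour of the \emph{maximum} of the Gaussian nulls replaced by that of the \emph{minimum} of the Gamma nulls, and with Theorem~\ref{lemma:NAanbn} applied in its reversed form. Because the test is reversed, a false positive now occurs when a null statistic falls \emph{below} the threshold, so unreliability follows once we verify the reversed condition (\ref{eqn:an2}): it suffices to show that $\min_{i\notin\S}T_{i,2m}$ drops below $\gamma:=\mathrm{median}(T_{i,2m}|\theta_1)$ with probability bounded away from zero, since the reversed analogue of Theorem~\ref{lemma:NAanbn} then gives $\min_\tau \P(\mathcal{E}_\tau)\ge \min\!\bigl(1/2,\,\P(\min_{i\notin\S}T_{i,2m}\le\gamma)\bigr)>0$.

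First I would exploit the scale structure of the Gamma family. Writing $G\sim\mathrm{Gamma}(2m,1)$ with distribution function $F$, the null statistic satisfies $T_{i,2m}|\theta_0\stackrel{d}{=}\theta_0\,G$, and the $n-s$ null statistics are independent, so
\[
\P\!\left(\min_{i\notin\S}T_{i,2m}>\gamma\right)=\bigl(1-F(\gamma/\theta_0)\bigr)^{\,n-s}\le \exp\!\bigl(-(n-s)\,F(\gamma/\theta_0)\bigr).
\]
Everything thus reduces to lower bounding $(n-s)\,F(\gamma/\theta_0)$; this is the minimum-extreme-value counterpart of the maximum analysis used for the Gaussian nulls (cf.\ \cite{lea:lin:roo83}). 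Since $\gamma$ is a fixed constant for fixed $m$ while $\theta_0$ grows like $(n-s)^{1/2m}$, the argument $\gamma/\theta_0\to 0$, and the left tail of the incomplete gamma function obeys $F(x)=\frac{x^{2m}}{(2m)!}(1+o(1))$; a convenient one-sided version is $F(x)\ge \frac{e^{-x}x^{2m}}{(2m)!}$, obtained from $e^{-t}\ge e^{-x}$ on $[0,x]$.

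The constant $2(m-1)$ then enters through a clean bound on the alternative median. The median of $\mathrm{Gamma}(2m,1)$ satisfies $\gamma>2m-1>2(m-1)$ (using the classical shape-$k$ estimate $k-\tfrac13<\mathrm{median}<k$), so the hypothesis $\theta_0<2(m-1)(n-s)^{1/2m}$ already forces $\theta_0<\gamma\,(n-s)^{1/2m}$, i.e.\ $\gamma/\theta_0>(n-s)^{-1/2m}$ and hence $(\gamma/\theta_0)^{2m}>1/(n-s)$. Feeding this into the tail bound gives $(n-s)\,F(\gamma/\theta_0)\ge \frac{e^{-\gamma/\theta_0}}{(2m)!}=\frac{1-o(1)}{(2m)!}$, a strictly positive constant, so $\P(\min_{i\notin\S}T_{i,2m}\le\gamma)$ stays bounded away from zero and unreliability follows. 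Note that because weak unreliability holds for \emph{any} fixed multiple of $(n-s)^{1/2m}$, one is free to discard the $((2m)!)^{1/2m}$ factor from the tight threshold and simply use the median bound $2(m-1)$, which is what yields the clean statement.

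I expect the main obstacle to be pinning down the constant cleanly rather than the overall architecture. Three points need care: (i) establishing a valid non-asymptotic lower bound on the incomplete-gamma left tail that survives multiplication by $n-s$, so that the $(2m)!$ coefficient and the $e^{-\gamma/\theta_0}$ correction are controlled uniformly as $n\to\infty$; (ii) justifying the lower bound $\gamma>2(m-1)$ on the alternative median; and (iii) being explicit that, unlike the Gaussian case, when $\theta_0=\Theta\bigl((n-s)^{1/2m}\bigr)$ the quantity $(n-s)\,F(\gamma/\theta_0)$ tends to a positive \emph{constant} rather than to infinity. Consequently (\ref{eqn:an2}) holds with its full strength ($\,\P\to1$) only when $\theta_0=o\bigl((n-s)^{1/2m}\bigr)$, while in the boundary regime we rely instead on the union probability being bounded away from zero; either way $\min_\tau\P(\mathcal{E}_\tau)>0$, which is exactly what the definition of unreliability demands.
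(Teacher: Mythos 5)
Your proposal is correct and follows essentially the same route as the paper: reduce to the reversed condition (\ref{eqn:an2}), lower-bound the alternative median by $2(m-1)$, and analyze $\min_{i\notin\S}T_{i,2m}$ via the left tail of the Gamma CDF raised to the power $n-s$ with the $(n-s)^{1/2m}$ scaling (the paper's Appendix~\ref{sec:AppC} computes the same limit $1-e^{-1/(2m)!}$ at the boundary). If anything you are more careful than the paper in noting that at the boundary regime the union probability only stays bounded away from zero rather than tending to $1$, which still suffices for unreliability under the paper's definition.
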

\begin{proof}
In this case, because the hypothesis test is reversed, we aim to satisfy (\ref{eqn:an2}).
Since $\mbox{median}(T_{i,2m} | \theta_1) \geq 2(m-1)$, we have
\begin{eqnarray*} \nonumber
\mathbb{P}\left( \frac{\min_{i \not \in \Sc} T_{i,2m} }{\mathrm{median}\left(T_{i,2m} | \theta_1 \right)  } \leq 1 \right) \geq \mathbb{P}\left( \frac{\min_{i \not \in \Sc} T_{i,2m}}{2(m-1)}  \leq 1 \right).
\end{eqnarray*}
If $ 2(m-1) > \frac{\theta_0}{(n-s)^{\frac{1}{2m}}}$, the right hand side above goes to $1$ as $n$ grows large (see Appendix \ref{sec:AppC}).
Together with Theorem \ref{lemma:NAanbn} this implies that if $ \theta_0 < 2(m-1) (n-s)^{\frac{1}{2m}} $ then the non-sequential procedure is unreliable.
\end{proof}

\subsubsection{{Sequential Testing}}
Sequential thresholding makes $m$ measurements of each component in the set $\S_k$ at each step.  The test statistic follows the Gamma distributions in (\ref{eqn:GammateststatsNA}).

\begin{cor}
If $\theta_0 > \frac{\log (s \log_2n)}{m}$, then sequential thresholding is reliable.
\end{cor}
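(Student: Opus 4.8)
The plan is to follow the template of Corollary~\ref{cor:gausS}, replacing the Gaussian extreme-value estimate by its Gamma counterpart: I would verify the reversed reliability condition (\ref{eqn:bn2}) and then invoke the reversed form of Theorem~\ref{lemma:adapt}. Because the test is reversed, a vacancy $i\in\S$ is discarded only if its statistic climbs \emph{above} the null median at some stage, so the false-negative event is exactly $\{\max_{k=1}^{K}\max_{i\in\S}T_{i,m}^{(k)}\ge\mathrm{median}(T_{i,m}|\theta_0)\}$, which (\ref{eqn:bn2}) requires to have vanishing probability. The two ingredients I need are: under the alternative $T_{i,m}^{(k)}\sim\mathrm{Gamma}(m,1)$, with at most $Ks$ such statistics across all $K$ stages and all $i\in\S$; and under the null the scaling identity $\mathrm{Gamma}(m,\theta_0)=\theta_0\cdot\mathrm{Gamma}(m,1)$, which gives $\mathrm{median}(T_{i,m}|\theta_0)=\theta_0\,\nu_m$ with $\nu_m:=\mathrm{median}(\mathrm{Gamma}(m,1))=m\,(1+o(1))$.

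First I would control the maximum by a union bound over the $Ks$ alternative statistics,
\begin{eqnarray*}
\P\!\left(\max_{k,\,i\in\S}T_{i,m}^{(k)}\ge\theta_0\nu_m\right)\ \le\ Ks\cdot\P\!\left(\mathrm{Gamma}(m,1)\ge\theta_0\nu_m\right),
\end{eqnarray*}
and bound the right-hand tail by the upper incomplete-gamma (Chernoff) estimate $\P(\mathrm{Gamma}(m,1)\ge\gamma)\le\gamma^{m-1}e^{-\gamma}/(m-1)!$, valid for $\gamma$ large relative to $m$. Taking $\gamma=\theta_0\nu_m\approx m\theta_0$, the bound behaves like $Ks\,(m\theta_0)^{m-1}e^{-m\theta_0}/(m-1)!$; requiring the exponent to beat $\log(Ks)$, i.e.\ $m\theta_0>(1+\delta)\log(Ks)$ for some $\delta>0$, makes this tend to zero (the polylog prefactor is then swamped by the polynomial decay). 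Since $\nu_m=m(1+o(1))$ this is the statement $\theta_0\gtrsim\frac{1}{m}\log(Ks)$, and substituting $K=(1+\epsilon)\log_2 n$ gives $\log(Ks)=\log(s\log_2 n)+O(1)$, so $\theta_0>\frac{1}{m}\log(s\log_2 n)$ (with a suitable implicit constant) establishes (\ref{eqn:bn2}).

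I would then combine this with the false-positive bound, which is identical to the forward case: an occupied component $i\notin\S$ (the null here) is retained across all $K$ stages only if its statistic lands on the ``keep'' side of its own median at each stage, an event of probability $2^{-K}$, so $\P(\S^c\cap\S_K\neq\emptyset)\le(n-s)/2^{K}\to0$ when $K=(1+\epsilon)\log_2 n$. With both the false-negative probability (via (\ref{eqn:bn2})) and the false-positive probability tending to zero, the reversed form of Theorem~\ref{lemma:adapt} gives $\lim_{n\to\infty}\P(\mathcal{E})=0$, i.e.\ reliability.

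The main obstacle is the Gamma tail/median bookkeeping rather than anything conceptual. Unlike the Gaussian statistic, $\mathrm{Gamma}(m,1)$ has no closed-form median and its upper tail carries a polynomial prefactor $\gamma^{m-1}/(m-1)!$, so care is needed to pin down $\nu_m=m(1+o(1))$ and to verify that this prefactor does not overwhelm the $e^{-\gamma}$ decay in the union bound. Since the claim is only asserted up to constant factors, crude bounds suffice; but it is precisely the matching of the exponent $\gamma\approx m\theta_0$ against $\log(Ks)$ that forces the scaling $\theta_0\asymp\frac{1}{m}\log(s\log_2 n)$.
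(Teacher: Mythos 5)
Your proposal is correct and follows essentially the same route as the paper: reduce to condition (\ref{eqn:bn2}), lower-bound the null median by a constant multiple of $m\theta_0$, and control the maximum of the $Ks$ alternative $\mathrm{Gamma}(m,1)$ statistics at threshold $\gamma\approx\log(Ks)$ (the paper uses the exact CDF and independence where you use a union bound plus a tail estimate, which is an immaterial difference). The only cosmetic discrepancy is that the paper bounds the null median below by $\theta_0(m-1)$ rather than $\theta_0\,m(1+o(1))$, yielding the constant $1/(m-1)$ in place of your $1/m$; both match the stated corollary up to constants.
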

\begin{proof}
It suffices to show (\ref{eqn:bn2}) is satisfied. For all $m$ and $\theta_0$, we have $\mathrm{median}(T_{i,m}|\theta_0) \geq \theta_0(m-1)$.  We upper bound (\ref{eqn:bn2}) by
\begin{eqnarray} \nonumber
 \lim_{n\rightarrow \infty} \mathbb{P}\left(  \frac{\max_{k=1}^{K} \max_{i \in \Sc} T_{i,m}^{(k)}} {\theta_0(m-1)} \geq 1  \right)
\end{eqnarray}
which goes to zero in the limit provided $\theta_0(m-1) > \log Ks$ (see Appendix \ref{sec:AppD}).  Together with Theorem \ref{lemma:adapt} if
\begin{eqnarray}
  \theta_0 > \frac{\log Ks}{m-1}
\end{eqnarray}
with $K = (1+ \epsilon)\log_2 n$, then sequential thresholding is reliable.
\end{proof}

\subsection{Poisson Model: Photon-based Detection} \label{sec:pois}
Lastly we consider a situation in which the component distributions are Poisson.  This model arises naturally in testing problems involving photon counting  (e.g.,  optical communications or biological applications using fluorescent markers).  We let the (sparse) alternative follow a Poisson with \emph{fixed} rate $\theta_1$, and the null hypothesis a rate $\theta_0$, $\theta_0 > \theta_1$:
\begin{eqnarray*}
 y_i \sim
\left\{
  \begin{array}{ll}
    \mathrm{Poisson}(\theta_0) & i \not \in \mathcal{S} \\
    \mathrm{Poisson}(\theta_1) & i \in \mathcal{S} \ ,
  \end{array}
\right.
\end{eqnarray*}
Note that as $\theta_0 > \theta_1$, our hypothesis test is reversed as in the spectrum sensing example (and equations (\ref{eqn:an2}) and (\ref{eqn:bn2})).

The test statistic is a sum of the individual measurements, again following a Poisson distribution.   In this setting, the gap between sequential and non-sequential testing is similar to that of the Gaussian case. Proofs are left to Appendices \ref{sec:AppE} and \ref{sec:AppF}.


\begin{cor}
For any fixed $\theta_1$, if $\theta_0 < \frac{\log(n-s)}{2m}$,
non-sequential thresholding is unreliable.
\end{cor}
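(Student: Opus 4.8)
The plan is to mirror the Gamma spectrum-sensing corollary: since $\theta_0>\theta_1$ the hypothesis test is reversed, so it suffices to verify the reversed unreliability criterion (\ref{eqn:an2}) and then invoke Theorem~\ref{lemma:NAanbn}. Because the $2m$ per-component observations are i.i.d.\ and the Poisson family is closed under convolution, the non-sequential test statistic is $T_{i,2m}=\sum_{j=1}^{2m}y_{i,j}\sim\mathrm{Poisson}(2m\theta_0)$ for $i\notin\S$ and $T_{i,2m}\sim\mathrm{Poisson}(2m\theta_1)$ for $i\in\S$. Condition (\ref{eqn:an2}) asks that $\min_{i\notin\S}T_{i,2m}$ dip below $\mathrm{median}(T_{i,2m}|\theta_1)$ with probability tending to one.

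First I would remove the denominator. Since $\theta_1$ and $m$ are fixed, $T_{i,2m}|\theta_1\sim\mathrm{Poisson}(2m\theta_1)$ has a fixed finite median $\nu_1$, a non-negative integer independent of $n$. Thus (\ref{eqn:an2}) reduces to showing $\P(\min_{i\notin\S}T_{i,2m}\le\nu_1)\to1$.

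The heart of the argument is an extreme-value estimate on the minimum of the $n-s$ i.i.d.\ null statistics. By independence,
$$\P\Big(\min_{i\notin\S}T_{i,2m}>\nu_1\Big)=\big(1-\P(T\le\nu_1)\big)^{\,n-s}\le\exp\!\big(-(n-s)\,\P(T\le\nu_1)\big),$$
with $T\sim\mathrm{Poisson}(2m\theta_0)$. Retaining only the zeroth term gives the clean bound $\P(T\le\nu_1)\ge\P(T=0)=e^{-2m\theta_0}$, so the right-hand side is at most $\exp(-(n-s)e^{-2m\theta_0})$, which vanishes exactly when $(n-s)e^{-2m\theta_0}\to\infty$, i.e.\ when $\log(n-s)-2m\theta_0\to\infty$. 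Under the paper's growth convention this is precisely the regime $\theta_0<\frac{\log(n-s)}{2m}$. Combining the reduction with Theorem~\ref{lemma:NAanbn} then gives $\min_\tau\P({\cal E}_\tau)\ge1/2$, establishing unreliability.

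The main subtlety is the sharpness of the crude $k=0$ bound at the boundary. Including the remaining terms replaces $e^{-2m\theta_0}$ by $e^{-2m\theta_0}\sum_{k=0}^{\nu_1}(2m\theta_0)^k/k!$, an improvement by only a polynomial-in-$\theta_0$ factor; since this alters $\log(n-s)-2m\theta_0$ by a lower-order $O(\log\theta_0)$ amount, it does not move the leading threshold, so the simplest bound already captures the correct constant $1/(2m)$. The one point requiring care is interpreting the strict inequality in the convention so that $\log(n-s)-2m\theta_0$ genuinely diverges (e.g.\ $\theta_0\le(1-\delta)\log(n-s)/(2m)$ for fixed $\delta>0$) rather than merely staying positive; this is the same boundary matching that underlies the Gaussian and Gamma corollaries.
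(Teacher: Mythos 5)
Your proposal is correct and follows essentially the same route as the paper's Appendix~E argument: both reduce the reversed criterion to the event $\{\min_{i\notin\S}T_{i,2m}=0\}$, bound the per-component probability below by $e^{-2m\theta_0}$, and conclude from $(1-e^{-2m\theta_0})^{n-s}\to 0$ when $2m\theta_0<\log(n-s)$. Your remarks on the boundary interpretation and the sharpness of the $k=0$ term are sensible refinements of the same argument, not a different method.
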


\begin{cor}
For any fixed $\theta_1$, if  $\theta_0 > \frac{\log (s\log_2 n) +1}{m}$, sequential thresholding is reliable.
\end{cor}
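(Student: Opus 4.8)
The plan is to verify the reliability condition (\ref{eqn:bn2}) for the reversed test and then invoke Theorem \ref{lemma:adapt}. Under the alternative each retained statistic is $T_{i,m}^{(k)}\sim\mathrm{Poisson}(m\theta_1)$ with the \emph{fixed} rate $m\theta_1$, while the threshold $\mathrm{median}(T_{i,m}|\theta_0)$ is the median of $\mathrm{Poisson}(m\theta_0)$, which grows because $\theta_0$ grows with $n$. The false-negative event in (\ref{eqn:bn2}) is that some alternative statistic, over the $K$ passes, exceeds this threshold, so the whole argument reduces to controlling an upper tail of a fixed-rate Poisson against a growing threshold.

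First I would lower bound the threshold. Let $\nu=\mathrm{median}(\mathrm{Poisson}(m\theta_0))$, which is an integer; using the classical fact that the median of $\mathrm{Poisson}(\mu)$ is at least $\mu-\log 2>\mu-1$, we have $\nu\geq m\theta_0-1$. Since raising the threshold only decreases the probability, (\ref{eqn:bn2}) is bounded above by $\mathbb{P}\left(\max_{k=1}^K\max_{i\in\S}T_{i,m}^{(k)}\geq \nu\right)$, and a union bound over the $Ks$ pairs $(k,i)$ gives $Ks\,\mathbb{P}\left(\mathrm{Poisson}(m\theta_1)\geq \nu\right)$. Next I would apply a Poisson upper-tail estimate. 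The factorial-moment inequality $\mathbb{P}(\mathrm{Poisson}(\lambda)\geq a)\leq \lambda^{a}/a!$ (valid for integer $a$) combined with $a!\geq (a/e)^{a}$ yields $\mathbb{P}(\mathrm{Poisson}(m\theta_1)\geq a)\leq e^{-a}$ whenever $\log(a/(m\theta_1))\geq 2$, i.e. $a\geq e^{2}m\theta_1$. Taking $a=\nu$, this holds for all large $n$ because $\nu\geq m\theta_0-1\to\infty$ while $m\theta_1$ is held fixed.

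Putting these together bounds (\ref{eqn:bn2}) by $Ks\,e^{-\nu}\leq Ks\,e^{-(m\theta_0-1)}=\exp\{\log(Ks)-m\theta_0+1\}$, which tends to zero precisely when $m\theta_0-1>\log(Ks)$. With $K=(1+\epsilon)\log_2 n$ we have $\log(Ks)=\log(s\log_2 n)+\log(1+\epsilon)$, and absorbing the harmless constant $\log(1+\epsilon)$ this is exactly the hypothesis $\theta_0>\frac{\log(s\log_2 n)+1}{m}$; Theorem \ref{lemma:adapt} then gives reliability. The main obstacle is getting the constants to line up cleanly: the ``$+1$'' in the numerator must be produced by the Poisson median bound (using $\mu-1$ rather than the sharper $\mu-\log 2$), and the tail estimate has to be crude enough to collapse to $e^{-a}$ yet remain valid in the regime $m\theta_0\gg m\theta_1$, which is exactly where the condition $\nu\geq e^{2}m\theta_1$ eventually holds. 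A secondary care point is that the factorial-moment bound requires an integer threshold, which is why I keep the argument in terms of the integer-valued median $\nu$ rather than the real number $m\theta_0-1$.
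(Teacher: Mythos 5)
Your proof follows essentially the same route as the paper's: lower bound the median of $\mathrm{Poisson}(m\theta_0)$ by $m\theta_0-1$, union bound over the $Ks$ alternative statistics, and apply a Poisson upper-tail estimate. The one place you diverge is the tail bound, and it introduces a real (though easily repaired) gap. Your factorial-moment estimate combined with $a!\geq (a/e)^a$ gives $\mathbb{P}(\mathrm{Poisson}(\lambda)\geq a)\leq e^{-a(\log(a/\lambda)-1)}$, which is exactly the Chernoff bound the paper uses; but you then discard the factor $\log(a/\lambda)-1$ and keep only $e^{-a}$. With that weakening your final bound is $Ks\,e^{-(m\theta_0-1)}=\exp\{\log(Ks)-(m\theta_0-1)\}$, and your claim that this ``tends to zero precisely when $m\theta_0-1>\log(Ks)$'' is false: a pointwise strict inequality is not enough, you need $(m\theta_0-1)-\log(Ks)\to\infty$. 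Concretely, if $m\theta_0-1=\log(s\log_2 n)+1/n$, the hypothesis of the corollary holds but your bound converges to $1+\epsilon$, not $0$. The paper avoids this by evaluating the full exponent at $\gamma=\log Ks$: since $\theta_1$ is fixed, the exponent $\gamma(\log(\gamma/(m\theta_1))-1)$ grows like $\log(Ks)\cdot\log\log(Ks)$, which strictly dominates the $\log(Ks)$ coming from the union bound, so $\mathbb{P}(\max_{k,i}T_{i,m}^{(k)}\geq \log Ks)\to 0$ outright and the mere inequality $m\theta_0-1\geq\log Ks$ then suffices by monotonicity of the tail. The fix for your argument is simply to not throw away the $\log(\nu/(m\theta_1))$ factor before taking the limit; everything else (the integer-median care, the $+1$ from the median bound, the identification of $\log(Ks)$ with $\log(s\log_2 n)$ up to the constant $\log(1+\epsilon)$) matches the paper.
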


\section{Conclusion}
This paper studied the problem of high-dimensional testing and sparse recovery
from the perspective of sequential analysis. The gap between the null parameter $\theta_0$ and the alternative $\theta_1$ plays a crucial role in this problem.  We derived necessary conditions for reliable recovery in the non-sequential setting and contrasted them with sufficient conditions for reliable recovery using the proposed sequential testing procedure.  Applications of the main results to several commonly encountered models show that sequential testing can be exponentially (in dimension $n$) more sensitive to the difference between the null and alternative distributions, implying that subtle cases can be much more reliably determined using sequential methods.

\bibliographystyle{IEEEtran}
\bibliography{AdapSensISIT}

\newpage

\appendix

\subsection{Gamma Non-Sequential} \label{sec:AppC}
The cumulative distribution function of $\mathrm{Gamma}(2m,\theta_0)$ is given as
\begin{eqnarray} \nonumber
F(\gamma) = 1- e^{-\frac{\gamma}{\theta_0}} \sum_{\ell=0}^{2m-1} \left(\frac{\gamma}{\theta_0} \right)^\ell \frac{1}{\ell!}
\end{eqnarray}
hence,
\begin{eqnarray*} \nonumber
  \mathbb{P}\left(\frac{\min_{i \not \in \S} T_{i,2m}}{\gamma} \leq 1 \right) =  1-\left(e^{-\frac{\gamma}{\theta_0}} \sum_{\ell=0}^{2m-1} \left(\frac{\gamma}{\theta_0} \right)^\ell\frac{1}{\ell!}\right)^{n-s}
\end{eqnarray*}
Letting $\gamma = \frac{\theta_0}{(n-s)^{\frac{1}{2m}}}$ and taking the limit, it can be shown
\begin{eqnarray} \nonumber
\lim_{n\rightarrow \infty} 1-\left(e^{-\left({(n-s)^{-\frac{1}{2m}}}\right)} \sum_{\ell=0}^{2m-1} \frac{(n-s)^{\frac{-\ell}{2m}}}{\ell!}\right)^{n-s}\\
 = 1-e^{-\frac{1}{(2m)!}}. \nonumber
\end{eqnarray}
If $\gamma > \frac{\theta_0}{(n-s)^{\frac{1}{2m}}}$, then
\begin{eqnarray} \nonumber
\mathbb{P}\left(\frac{\min_{i \not \in \S} T_{i,2m}}{\gamma} \leq 1 \right)  = 1.
\end{eqnarray}

\subsection{Gamma Sequential} \label{sec:AppD}
The cumulative distribution function of $\mathrm{Gamma}(m,1)$ is given as
\begin{eqnarray*}
F(\gamma) = 1- e^{-\gamma} \sum_{\ell=0}^{m-1} \frac{\gamma^\ell}{\ell!}
\end{eqnarray*}
hence,
\begin{eqnarray*}
  \mathbb{P}\left(\max_{k = 1}^{K} \max_{i \in \S} T_{i,m}^{(k)} \geq \gamma \right) = 1-\left(  1- e^{-\gamma} \sum_{\ell=0}^{m-1} \frac{\gamma^\ell}{\ell!} \right)^{Ks}.
\end{eqnarray*}
Letting $\gamma = (1+\epsilon) \log{Ks} $, for some $\epsilon > 0$, we have
\begin{eqnarray*}
  \lim_{n \rightarrow \infty} 1-\left(  1- \frac{1}{(Ks)^{1+\epsilon}} \sum_{\ell=0}^{m-1} \frac{\left((1+\epsilon) \log{Ks} \right)^\ell}{\ell!} \right)^{Ks}  = 0 \ .
\end{eqnarray*}

\subsection{Poisson Non-Sequential} \label{sec:AppE}
The likelihood ratio statistic is distributed as
\begin{eqnarray} \label{eqn:PoissonteststatsNA} \nonumber
T_{i,2m} = \sum_{j=1}^{2m} y_{i,j} \overset{iid}\sim
\left\{
  \begin{array}{ll}
    \mathrm{Poisson}(2m\theta_0) & i \not \in \mathcal{S} \\
    \mathrm{Poisson}(2m\theta_1) & i \in \mathcal{S}.
  \end{array}
\right.
\end{eqnarray}
It suffices to show
\begin{eqnarray*}
\lim_{n\rightarrow \infty} \mathbb{P}\left(  \frac{\min_{i \not \in \S} T_{i,2m}}{\mathrm{median}(T_{2m}| \theta_1)} \leq 1 \right) = 1.
\end{eqnarray*}
for any $\theta_0 < \frac{\log(n-s)}{2m}$.  The bound we derive is loose, but sufficient to show the adaptive scheme is superior.
First, we assume that $\mathrm{median}(T_{2m}| \theta_1) > 0 $.  Next we have
\begin{eqnarray*}
\mathbb{P}\left( \min_{i \not \in \S}  T_{i,2m} \leq {\mathrm{median}(T_{2m}| \theta_1) } \right) \\
& \ \hspace{-1.5in}  \geq \  \mathbb{P}\left( \min_{i \not \in \S}  T_{i,2m} =0 \right)\\
& \ \ \hspace{-1.3in}  =  \ 1-\left(1-e^{- 2m\theta_0 }\right)^{n-s} \ .
\end{eqnarray*}
If $ 2m \theta_0 < \log (n-s)$, then
\begin{eqnarray*}
\lim_{n\rightarrow \infty}  1-\left(1-e^{-2m \theta_0}\right)^{n-s}  = 1
\end{eqnarray*}
which is also true provided $\theta_0 < \frac{\log(n-s)}{2m}$ and concludes the proof.

\subsection{Poisson Sequential} \label{sec:AppF}
In sequential thresholding, for each $i \in S_k$
\begin{eqnarray} \label{eqn:PoissonteststatsAd} \nonumber
T_{i,m}^{(k)} = \sum_{j=1}^{m} y_{i,j} \overset{iid}\sim
\left\{
  \begin{array}{ll}
    \mathrm{Poisson}(m\theta_0) & i \not \in \mathcal{S} \\
    \mathrm{Poisson}(m\theta_1) & i \in \mathcal{S}.
  \end{array}
\right.
\end{eqnarray}

We need to show, for the test statistic above,
\begin{eqnarray*}
\lim_{n\rightarrow \infty} \mathbb{P}\left(  \frac{\max_{k=1}^K \max_{i \in \S} T_{i,m} }{\mathrm{median}(T_{m}| \theta_0 )} \geq 1 \right) = 0.
\end{eqnarray*}
First, we note $\mathrm{median}(T_{m}| \theta_0 ) \geq  m\theta_0-1$.  Hence,
\begin{eqnarray*}
\mathbb{P}\left(  \max_{k=1}^K \max_{i \in \S} T_{i,m} \geq \mathrm{median}(T_{m}| \theta_0 ) \right) \
\\  & \ \hspace{-2.8in} \leq & \hspace{-1.4in} \mathbb{P}\left(  \max_{k=1}^K \max_{i \in \S} T_{i,m} \geq m\theta_0-1 \right).
\end{eqnarray*}
We can bound the probability of a single event by Chernoff's bound \cite{Gubner:2006:PRP:1207291}, p.166.  For $T_{i,m} \sim \mathrm{Possion}({m\theta_1})$ we have:
\begin{eqnarray*}
  \mathbb{P}\left(T_{i,m} \geq \gamma \right) &\leq& e^{- {m\theta_1} - \gamma \left( \log\left(\frac{\gamma}{m\theta_1}\right)-1\right)} \\
  &\leq& e^{- \gamma \left( \log\left(\frac{\gamma}{m\theta_1}\right)-1\right)}.
\end{eqnarray*}
which implies
\begin{eqnarray*}
  \mathbb{P}\left( \max_{k=1}^K \max_{i \in \S} T_{i,m} \geq \gamma \right) \leq 1 - \left(1-e^{- \gamma \left( \log\left(\frac{\gamma}{m\theta_1}\right)-1\right)}\right)^{Ks}
\end{eqnarray*}
Letting $\gamma = \log Ks$ and taking the limit as $n \rightarrow \infty$ of the expression above for any fixed $\theta_1$, we conclude
\begin{eqnarray*}
\lim_{n \rightarrow \infty}  \mathbb{P}\left(\frac{ \max_{k=1}^K \max_{i \in \S} T_{i,m}}{\log Ks} \geq 1 \right) = 0
\end{eqnarray*}
Thus, if $\log Ks  \leq  m\theta_0 -1$, or equivalently
\begin{eqnarray*}
  \theta_0 \geq \frac{\log Ks +1}{m},
\end{eqnarray*}
sequential thresholding is reliable.

\end{document}